\newaliascnt{lemma}{theorem}
\newaliascnt{proposition}{theorem}
\newtheorem{proposition}[proposition]{Proposition}
\newaliascnt{corollary}{theorem}
\newaliascnt{conjecture}{theorem}
\newaliascnt{example}{theorem}
\def\tagform@#1{\maketag@@@{\ignorespaces#1\unskip\@@italiccorr}}
\let\orgtheequation\theequation
\def\theequation{(\orgtheequation)}
\def\equationautorefname~{}
\begin{document}

\title[Torsion and ground state maxima]{Torsion and ground state maxima: \\ close but not the same}
\author[]{B. A. Benson, R. S. Laugesen, M. Minion, B. A. Siudeja}
\address{Department of Mathematics, Kansas State Univ., Manhattan,
KS 66506, U.S.A.}
\email{babenson\@@math.ksu.edu}
\address{Department of Mathematics, Univ.\ of Illinois, Urbana,
IL 61801, U.S.A.}
\email{Laugesen\@@illinois.edu}
\address{Lawrence Berkeley National Lab, Berkeley, CA 94720, U.S.A.}
\email{mlminion\@@lbl.gov}
\address{Department of Mathematics, Univ.\ of Oregon, Eugene,
OR 97403, U.S.A.}
\email{Siudeja\@@uoregon.edu}
\date{\today}

\keywords{Semilinear, Poisson, maximum point, torsion, landscape function, Dirichlet eigenfunction.}
\subjclass[2010]{\text{Primary 35B09. Secondary 35B38,35P99}}

\begin{abstract}
Could the location of the maximum point for a positive solution of a semilinear Poisson equation on a convex domain be independent of the form of the nonlinearity? Cima and Derrick found certain evidence for this surprising conjecture. 

We construct counterexamples on the half-disk, by working with the torsion function and first Dirichlet eigenfunction. On an isosceles right triangle the conjecture fails again. Yet the conjecture has merit, since the maxima of the torsion function and eigenfunction are unexpectedly close together. It is an open problem to quantify this closeness in terms of the domain and the nonlinearity.   
\end{abstract}

\maketitle

\section{\bf Introduction}
\label{sec:intro}

Suppose the Poisson equation
\[
\begin{cases}
  -\Delta u = f(u) & \text{in $\Omega$,} \\
 \quad \ \ u = 0 & \text{on $\partial \Omega$,}
\end{cases}
\]
has a positive solution on the bounded convex plane domain $\Omega$. Here the nonlinearity $f$ is assumed to be Lipschitz and \emph{restoring}, which means $f(z)>0$ when $z>0$. Cima and Derrick  \cite{CD11,CDK14} have conjectured that the location of the maximum point of $u$ is independent of the form of the nonlinearlity $f$.  

This conjecture sounds impossible, since the graph of the solution must vary with the nonlinearity. Numerical computations by Cima and co-authors give surprising support for the conjecture, though, and \autoref{fig:levelcurves} provides further food for thought by considering a triangular domain and plotting the level curves and maximum point for the choices $f(z)=1$ and $f(z)=\lambda z$. The corresponding linear Poisson equations describe the \emph{torsion function} and the \emph{ground state of the Laplacian} (see below). Our solutions were computed numerically by the finite element method on a mesh with approximately $10^6$ triangles. The maximum points for the two solutions in \autoref{fig:levelcurves} appear to coincide, even though the level curves differ markedly near the boundary. 

\begin{figure}[t]
    \hspace{\fill}
\includegraphics[width=3cm]{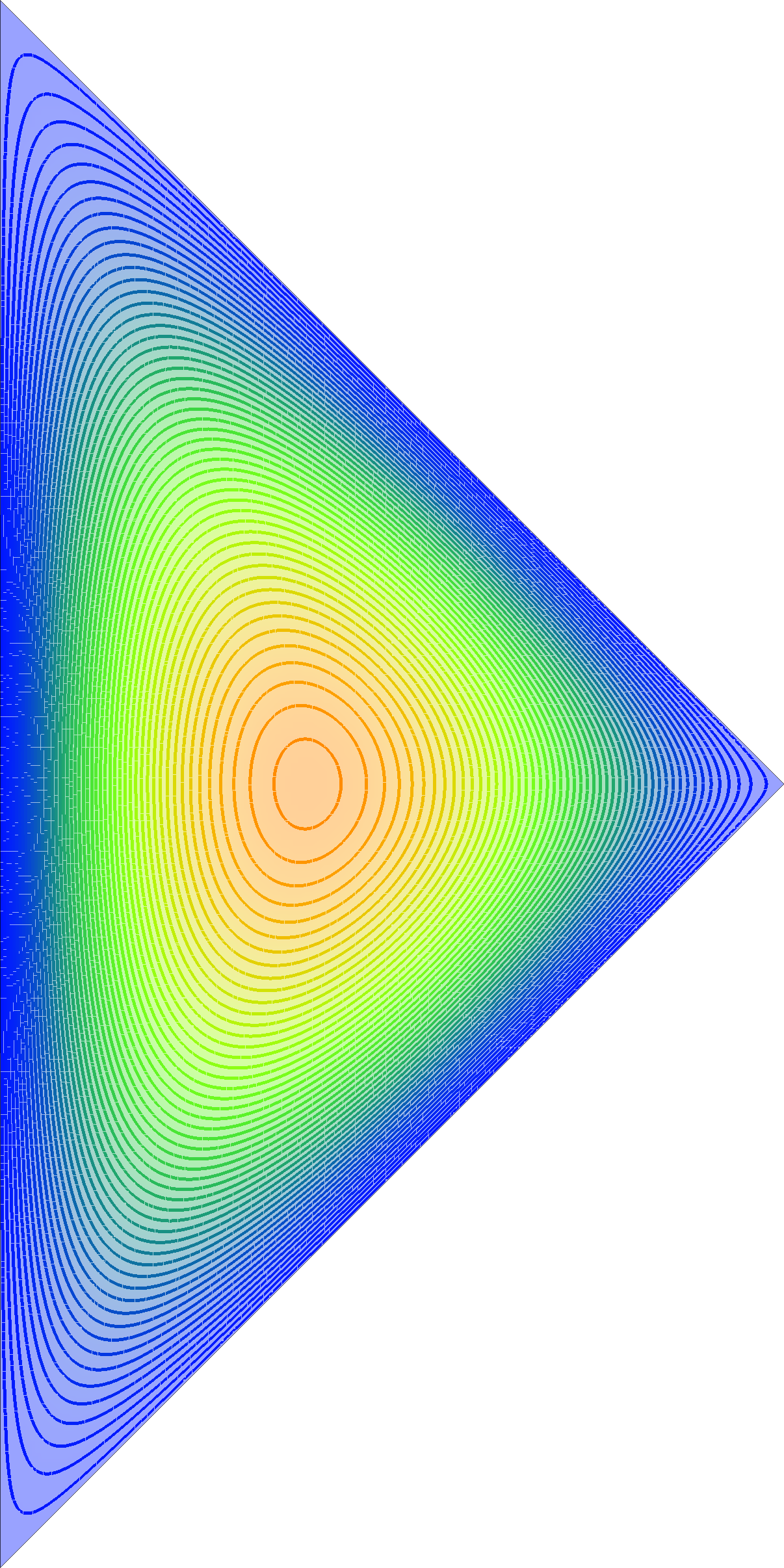}
    \hspace{\fill}
\includegraphics[width=3cm]{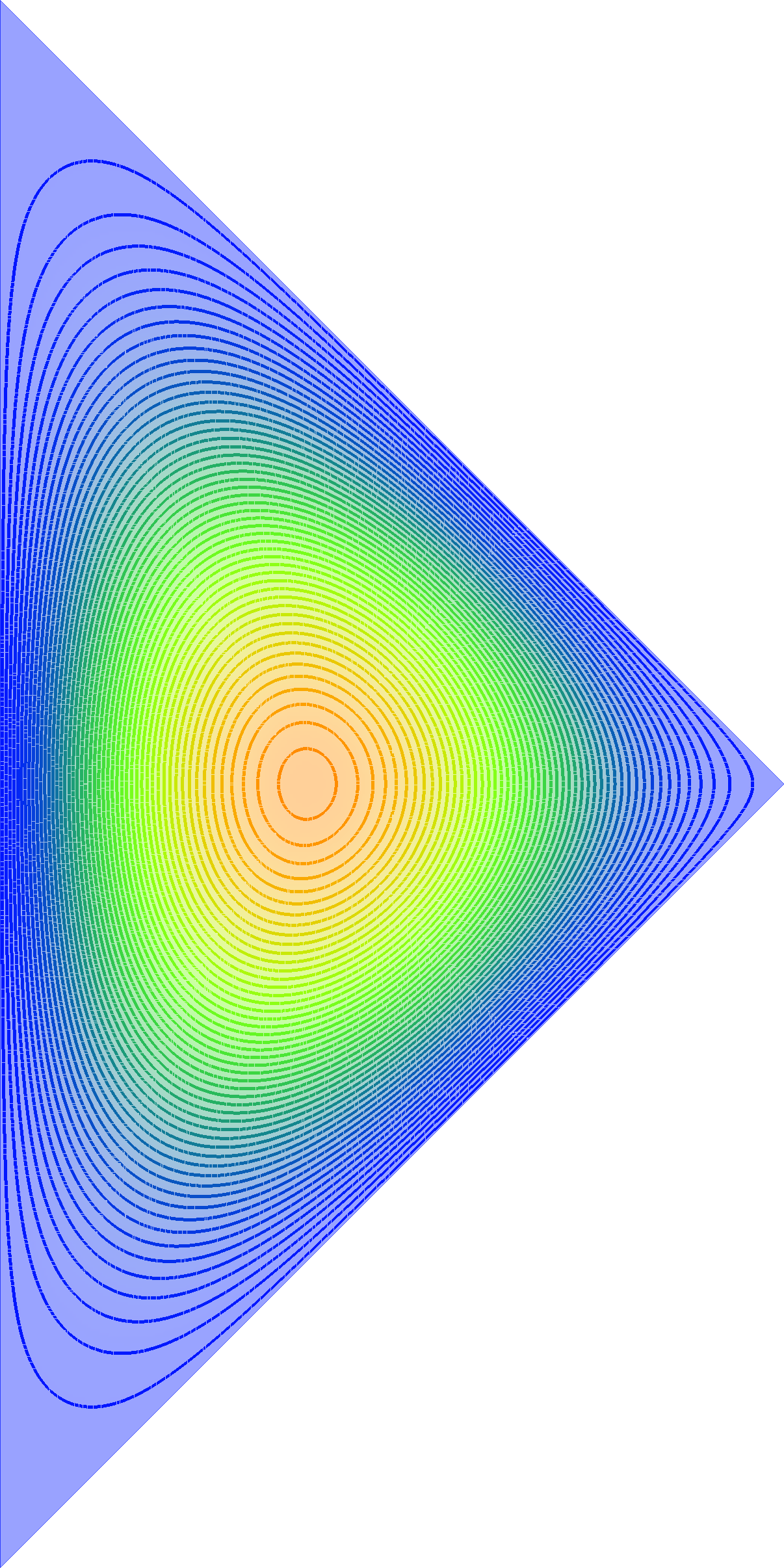}
    \hspace{\fill}
\caption{Level curves and the maximum point on a triangular domain, for solutions of two different Poisson type equations: the torsion function (left) and the first eigenfunction (right).}
\label{fig:levelcurves}
\end{figure}

We disprove the conjecture on a half-disk in \autoref{sec:half-disk}, and again on the right isosceles triangle in \autoref{sec:rightisos}. Interestingly, the conjecture is remarkably close to being true in these counterexamples, with the maximum points occurring in almost but not quite the same location. We cannot explain this unexpected closeness. 

A fascinating open problem is to bound the difference in location of the maximum points of two semilinear Poisson equations in terms of the difference between their nonlinearity functions and geometric information on the shape of the domain. Also, note that for both the half-disk and right isosceles triangle, our results show that the maximum point of the torsion function lies to the left of the maximum for the ground state (when oriented as in \autoref{fig:levelcurves}), which perhaps hints at a general principle for a class of convex domains. 

\subsection*{Notation}
The \emph{torsion} or \emph{landscape} function is the unique solution of the Poisson equation
\[
\begin{cases}
  -\Delta u = 1 & \text{in $\Omega$,} \\
 \quad \ \ u = 0 & \text{on $\partial \Omega$.}
\end{cases}
\]
Here we have chosen $f(z)=1$. Clearly $u$ is positive inside the domain, by the maximum principle. 

The \emph{Dirichlet ground state} or \emph{first Dirichlet eigenfunction of the Laplacian} is the unique positive solution of
\begin{equation*}
\begin{cases}
  -\Delta v = \lambda v& \text{in $\Omega$,} \\
\quad \ \   v = 0 & \text{on $\partial \Omega$,}
\end{cases}
\end{equation*}
where $\lambda>0$ is the first eigenvalue of the Laplacian on the domain under Dirichlet boundary conditions. Here we have chosen $f(z)=\lambda z$.

\section{\bf The half-disk}
\label{sec:half-disk}

The maximum points for the torsion function and ground state can lie so close together that one cannot distinguish them by the naked eye, as the following Proposition reveals. Yet the two points are not the same. 
\begin{proposition} \label{pr:half-disk}
Take $\Omega = \{ (x,y) : x> 0, x^2 + y^2 < 1 \}$ to be the right half-disk. On this domain the torsion function $u$ attains its maximum at $(0.48022,0)$ while the ground state $v$ attains its maximum at $(0.48051,0)$. Here the $x$-coordinates have been rounded to $5$ decimal places.
\end{proposition}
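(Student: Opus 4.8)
The plan is to reduce both maximizations to the symmetry axis $I=\{(x,0):0<x<1\}$, to obtain an essentially explicit formula for the ground state on $I$ and a rapidly convergent series for the torsion function on $I$, and then to pin down the two critical points by a rigorous numerical computation. Both $u$ and $v$ are invariant under the reflection $y\mapsto-y$, so each attains its maximum on $I$: for $v$ this will be clear from the explicit formula below, and for $u$ it follows from convexity of $\Omega$ — by the classical theorem of Makar--Limanov (and its refinements), $\sqrt{u}$ is concave on a bounded convex planar domain, so $u$ has a unique interior critical point, which by symmetry lies on $I$ and is the maximum. (Alternatively one may treat $v$ via log-concavity of the ground state, or via the moving-plane method; the moving-plane method applies to $u$ as well since $f\equiv1$ is Lipschitz.)

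For the ground state, I would work in polar coordinates $(r,\theta)$ with $\theta\in(-\tfrac\pi2,\tfrac\pi2)$, so that $\partial\Omega$ consists of the arc $r=1$ together with the rays $\theta=\pm\tfrac\pi2$. Separating variables, a positive Dirichlet eigenfunction that is even in $\theta$ must have angular factor $\cos\theta$ (the lowest angular mode vanishing at $\pm\tfrac\pi2$) and radial factor $J_1(\sqrt{\lambda}\,r)$ vanishing at $r=1$; hence $\lambda=j_{1,1}^2$ and $v(r,\theta)=J_1(j_{1,1}r)\cos\theta$, where $j_{1,1}$ is the first positive zero of $J_1$. Since $J_1(j_{1,1}r)>0$ for $0<r<1$, on each circle $r=\text{const}$ the function $v$ is maximal at $\theta=0$, so the global maximum lies on the positive $x$-axis, where $v(x,0)=J_1(j_{1,1}x)$. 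Differentiating gives $x_v=j_{1,1}'/j_{1,1}$, the ratio of the first positive zero of $J_1'$ to that of $J_1$, and certified numerical enclosures of these Bessel quantities give $x_v=0.48051\ldots$.

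For the torsion function, extend $u$ oddly across the diameter $\{x=0\}$ to a function $\tilde u$ on the unit disk with zero boundary values and $-\Delta\tilde u=\operatorname{sgn}(x)=\operatorname{sgn}(\cos\theta)$. Expanding $\operatorname{sgn}(\cos\theta)$ in its Fourier cosine series and solving the resulting Euler-type ODE mode by mode yields
\[
u(r,\theta)=\frac4\pi\sum_{\substack{m\ge1\\ m\ \mathrm{odd}}}\frac{(-1)^{(m-1)/2}}{m(m^2-4)}\bigl(r^2-r^m\bigr)\cos(m\theta),
\]
and in particular, on the axis,
\[
u(x,0)=\frac4\pi\sum_{\substack{m\ge1\\ m\ \mathrm{odd}}}\frac{(-1)^{(m-1)/2}}{m(m^2-4)}\,(x^2-x^m).
\]
(By partial fractions this sums into a closed form involving $\arctan$ and logarithms, but that is not needed.) Term-by-term differentiation in $x$ is legitimate, since the differentiated series still converges uniformly with terms $O(m^{-2})$; moreover near $x=0.48$ the contribution $m\,x^{m-1}$ decays geometrically, so a short partial sum plus an explicit tail estimate rigorously brackets the unique zero of $\tfrac{d}{dx}u(x,0)$. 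Combined with the uniqueness input above (or with a direct proof that $\tfrac{d}{dx}u(x,0)$ has a single sign change on $(0,1)$), this places the maximum at $x_u=0.48022\ldots$.

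Comparing the two values gives $x_u<x_v$, so the maxima are distinct — the torsion maximum lying strictly to the left — and the stated five-digit roundings follow. The main obstacle is not deriving the formulas but making the numerics rigorous: one must certify the fifth decimal of each $x$-coordinate and, in particular, separate $x_u$ from $x_v$ even though they differ by only about $3\times10^{-4}$. This calls for explicit, provable bounds on the tails of the torsion series and its derivative, for certified enclosures of $j_{1,1}$ and $j_{1,1}'$, and for the uniqueness/monotonicity input above to guarantee that the bracketed critical point really is the maximizer.
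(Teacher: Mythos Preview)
Your proposal is correct; the ground state portion is identical to the paper's. For the torsion function you take a genuinely different route. The paper quotes an explicit closed-form formula for $u(x,y)$ (involving $\arctan$ and a logarithm) from a reference, verifies by direct computation that it satisfies $-\Delta u=1$ with the right boundary values, reduces to the axis by applying the maximum principle to the harmonic function $u_y$ on the quarter-disk, and then locates the critical point by evaluating the explicit one-variable derivative $u_x(x,0)$ and checking its sign at two nearby points. You instead derive a Fourier series for $u$ by odd reflection to the full disk, invoke Makar--Limanov concavity of $\sqrt{u}$ (or moving planes) for the reduction to the axis, and bracket the critical point with partial sums plus explicit geometric tail bounds. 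Your derivation is more self-contained (nothing is quoted), and the symmetry reduction is cleaner though it imports a stronger theorem; the paper's closed form makes the final sign-check a touch more transparent and avoids any tail estimation. Both approaches require the same care in certifying the fifth decimal, which you correctly flag as the only real obstacle.
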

\begin{proof}
(i) The ground state is given in polar coordinates by
\[
v(r,\theta) = J_1(j_{1,1}r) \cos \theta
\]
where $J_1$ is the first Bessel function and $j_{1,1} \simeq 3.831706$ is its first positive zero. Clearly the maximum is attained on the $x$-axis, where $\theta=0$, and the function is plotted along this line in \autoref{fig:Bessel}. By setting $J_1^\prime(j_{1,1}r)=0$ and solving, we find $r = j^\prime_{1,1}/j_{1,1} \simeq 0.48051$, rounded to five decimal places, where $j^\prime_{1,1} \simeq 1.841184$ is the first zero of $J_1^\prime$.

\begin{figure}
\includegraphics[width=4cm]{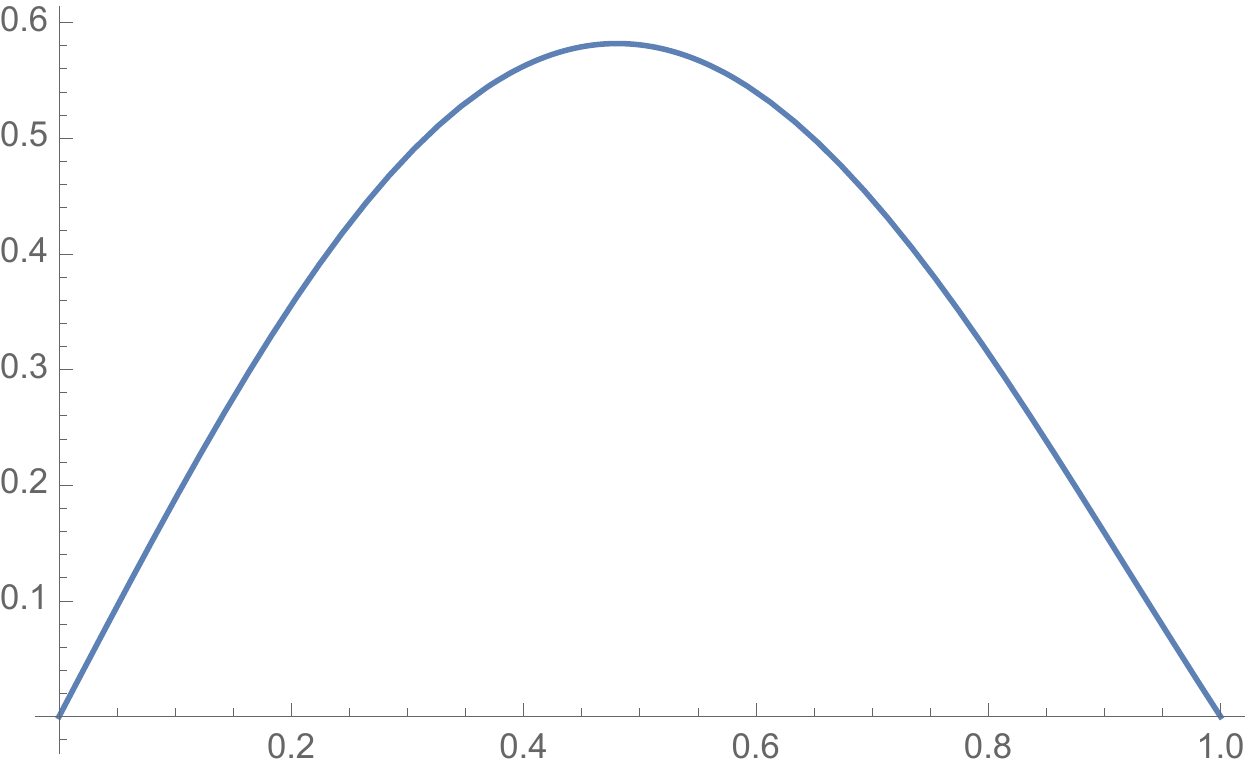}
\caption{The radial part of the ground state on the right half-disk: $v(r,0) = J_1(j_{1,1}r)$.}
\label{fig:Bessel}
\end{figure}

\medskip
(ii) The torsion function is more complicated \cite[Section 4.6.2]{W}, and is given by
\begin{align*}
u(x,y) & = \frac{1}{4\pi} \Big[ -2 \pi x^2 - 2 x \Big( (x^2 + y^2)^{-1} - 1\Big) \\
& \qquad \quad + \Big( 2 + (x^2 - y^2) \big( (x^2 + y^2)^{-2} + 1 \big) \Big) \arctan \Big( \frac{2x}{1 - (x^2 + y^2)} \Big) \\
& \qquad \quad + xy \Big( (x^2 + y^2)^{-2} - 1 \Big) \log \frac{x^2 + (1 + y)^2}{x^2 + (1 - y)^2} \, \Big] .
\end{align*}
One verifies the Dirichlet boundary condition on the right half-disk by examining four cases: (i) $u=0$ if $x=0$ and $0<|y|<1$, (ii) $u \to 0$ as $(x,y) \to (0,0)$, (iii) $u \to 0$ as $(x,y) \to (0,\pm 1)$, and (iv) $u \to 0$ as $(x,y) \to (x_1,y_1)$ with $x_1>0$ and $x_1^2+y_1^2=1$. 

To check $u$ satisfies the Poisson equation $-\Delta u=1$, a lengthy direct calculation suffices. 

We claim $u$ attains its maximum at a point on the horizontal axis. For this, first notice $u$ is even about the $x$-axis by definition, meaning $u(x,y)=u(x,-y)$. Hence the harmonic function $u_y$ equals zero on the $x$-axis for $0<x<1$. Further, $u_y \leq 0$ at points on the unit circle lying in the open first quadrant, since $u>0$ in the right half-disk and $u=0$ on the boundary. Also, one can compute that $u_y(x,y)$ approaches $0$ as $(x,y) \to (0,0)$ or $(x,y) \to (1,0)$ or $(x,y) \to (0,1)$ from within the first quadrant of the unit disk. Lastly $u_y$ vanishes on the $y$-axis for $0 < y <1$ (since $u=0$ there). Hence we conclude from the maximum principle that $u_y \leq 0$ in the first quadrant of the unit disk, and so $u$ attains its maximum somewhere on the $x$-axis. 

On the $x$-axis we have 
\[
u(x,0) = \frac{1}{4\pi} \Big[ -2 \pi x^2 - 2 x^{-1} + 2x + (2 + x^{-2} +x^2) \arctan \Big( \frac{2x}{1 - x^2} \Big) \, \Big]
\]
for $0<x<1$. Clearly $u(0,0)=u(1,0)=0$, and 
\[
u_x(x,0) = \frac{1}{\pi x^3} \big[ x + x^3 - \pi x^4 + \frac{1}{2}(x^4 - 1) \arctan \Big( \frac{2x}{1 - x^2} \Big) \big] .
\]
One can show by taking another derivative and applying elementary estimates that $u(x,0)$ is concave. Calculations show $u_x(x,0)$ is positive at $x=0.480219$ and negative at $x=0.480220$, and so the maximum of $u$ lies between these two points, that is, at $x=0.48022$ to $5$ decimal places.  
\end{proof}

\section{\bf The right isosceles triangle}
\label{sec:rightisos}

\begin{proposition} \label{pr:rightisos}
    Take $\Omega = \{ (x,y) : 0 < x < 1, |y|< 1-x \}$, which is an isosceles right triangle. On this domain the torsion function $u$ attains its maximum at $(0.39168,0)$ while the ground state $v$ attains its maximum at $(0.39183,0)$. Here the $x$-coordinates have been rounded to $5$ decimal places.
\end{proposition}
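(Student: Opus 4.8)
The plan is to treat the two functions separately, in each case reducing the problem to a one-variable question on the symmetry axis $\{y=0\}$.

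\emph{The ground state.} Since $\Omega$ is a rotated copy of half a square (of side $\sqrt2$, cut along a diagonal), $v$ is obtained by antisymmetrizing the two lowest Dirichlet eigenfunctions of that square. Carrying out the rotation of coordinates and simplifying with the triple-angle identities, one finds that, up to a positive constant,
\[
v(x,y)=\bigl(\cos\pi x+\cos\pi y\bigr)\sin\tfrac{\pi x}{2}\cos\tfrac{\pi y}{2},\qquad \lambda=\tfrac{5\pi^{2}}{2};
\]
one checks directly that this is positive inside $\Omega$, vanishes on the three sides, and equals $\tfrac12\bigl(\sin\tfrac{3\pi x}{2}\cos\tfrac{\pi y}{2}+\sin\tfrac{\pi x}{2}\cos\tfrac{3\pi y}{2}\bigr)$, a sum of two eigenfunctions with eigenvalue $\tfrac{5\pi^{2}}{2}$. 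To place the maximum on the axis I would compute
\[
\partial_{y}v(x,y)=-\pi\sin\tfrac{\pi x}{2}\,\sin\tfrac{\pi y}{2}\Bigl(1+\tfrac32\cos\pi y+\tfrac12\cos\pi x\Bigr),
\]
and observe that for $(x,y)\in\Omega$ with $y>0$ one has $0<x<1-y<1$, hence $\cos\pi x>\cos\pi(1-y)=-\cos\pi y$, so the last factor exceeds $1+\cos\pi y\ge0$; thus $\partial_{y}v<0$ for $y>0$, and (by evenness in $y$) the maximum is attained on $\{y=0\}$. There $v(x,0)=2\cos^{2}\tfrac{\pi x}{2}\sin\tfrac{\pi x}{2}$, whose derivative $\pi\cos\tfrac{\pi x}{2}\bigl(1-3\sin^{2}\tfrac{\pi x}{2}\bigr)$ vanishes exactly when $\sin\tfrac{\pi x}{2}=1/\sqrt3$, i.e.\ $x=\tfrac{2}{\pi}\arcsin\tfrac{1}{\sqrt3}=\tfrac{1}{\pi}\arccos\tfrac13=0.39183\ldots$, changing sign there from $+$ to $-$; this is the maximum.

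\emph{The torsion function.} There is no elementary closed form here, so I would expand $u$ in the Dirichlet eigenfunctions of the triangle, written as antisymmetrized products of sines indexed by $m>n\ge1$. Computing $\langle 1,\psi_{mn}\rangle$ one finds that only pairs of opposite parity contribute, giving a rapidly convergent double series for $u$ with explicit coefficients that are rational multiples of $\pi^{-4}$. That the maximum lies on $\{y=0\}$ follows exactly as in the proof of \autoref{pr:half-disk}: $u$ is even in $y$, so $u_{y}$ is harmonic on the upper half of $\Omega$; it vanishes on the axis and on the vertical side, tends to $0$ at the three corners (where $u$ vanishes like $r^{\pi/\beta}$ with interior angle $\beta<\pi$), and is $\le0$ on the slanted side (moving into $\Omega$ from that side in the $-y$ direction, $u$ increases from $0$, so $\partial_y u\le 0$ there); the maximum principle then forces $u_{y}\le0$ throughout, so the maximum sits on $\{y=0\}$. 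Restricting the series to the axis yields a one-variable function; following \autoref{pr:half-disk}, I would establish its concavity (to guarantee a unique critical point) and then, truncating the series with a rigorous tail estimate, show that the derivative is positive at $x=0.391675$ and negative at $x=0.391685$, identifying the maximum as $x=0.39168$ to five decimals.

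The delicate part is the torsion half of the argument: unlike $v$, it forces us to work with a series rather than a closed form, so the estimates that separate the torsion maximum both from the eigenfunction maximum and from the rounded value $0.39168$ must be carried out with certified (interval-arithmetic) precision, since the two maxima agree to three decimal places and the eigenfunction value itself comes out to only finitely many digits of $\arccos\tfrac13$. Everything else — the symmetry reductions, the explicit eigenfunction, and the elementary one-variable optimization — is routine.
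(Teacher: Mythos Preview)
Your ground-state argument is correct and equivalent to the paper's: you stay in the original coordinates, whereas the paper rotates and rescales to $T=\{0<y<x<\pi\}$ and writes the eigenfunction as $\sin x\sin 2y-\sin 2x\sin y$, but both routes land on $x=\tfrac{2}{\pi}\arcsin(1/\sqrt3)$. Your direct inequality for $\partial_y v$ is a nice alternative to the paper's approach of solving $v_x=v_y=0$ simultaneously.

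For the torsion function the paper takes a different and more efficient route. Rather than the Dirichlet eigenfunction expansion, it writes $u$ (on the rotated triangle $T$) as the polynomial particular solution $-\tfrac14(x-y)^2$ plus a harmonic correction given by a single $\sinh$/$\sin$ Fourier series. Along the symmetry line this series converges \emph{exponentially}, so the tail after twenty terms is dominated by two explicit geometric series and the sign of the derivative at the two bracketing abscissae can be certified by hand. Your eigenfunction expansion has coefficients of size roughly $(mn(m^2+n^2))^{-1}$, so the first-derivative series decays only polynomially; the tail bound is still rigorous in principle, but pushing it to five decimals is noticeably heavier.

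One further simplification you should borrow: you plan to prove concavity of $u$ along the axis directly from the series, mirroring the half-disk argument. The paper does not attempt this for the triangle; it simply cites the known fact that $\sqrt{u}$ is concave on any bounded convex domain (Borell; Kawohl), which already forces a unique interior maximum and turns the local sign change you detect into the global maximum. That citation spares you a second-derivative estimate which, with the eigenfunction series, would be delicate, since the termwise second derivatives are only of order $(mn)^{-1}$ and do not sum absolutely.
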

\begin{proof}
(i) Rotate the triangle by 45 degrees clockwise about the origin and scale up by a factor of $\pi/\sqrt{2}$, then translate by $\pi/2$ to the right and upwards, so that the triangle becomes 
\[
T = \{ (x,y) : 0<y<x<\pi \}. 
\]
This new triangle has ground state 
\[
v(x,y) = \sin x \sin 2y - \sin 2x \sin y = 2 \sin x \sin y (\cos y - \cos x) > 0
\]
with eigenvalue $1^2 + 2^2 =5$. One checks easily that $v=0$ on the boundary of $T$, where $y=0$ or $x=\pi$ or $y=x$. To find the maximum point, set $v_x=0$ and $v_y=0$ and deduce $\cos 2x = \cos x \cos y = \cos 2y$. Therefore the maximum lies on the line of symmetry $y=\pi-x$ of the triangle $T$. A little calculus shows that $v(x,\pi-x)$ attains its maximum when $x=\arcsin(1/\sqrt{3})+\pi/2$. Hence the ground state of the original triangle attains its maximum at $\big( (2/\pi) \arcsin(1/\sqrt{3}),0 \big) = (0.39183,0)$ to $5$ decimal places. 

\medskip
(ii) The torsion function on the triangle $T$ is 
\begin{align*}
& \! \! \! u(x,y) \\
& = - \frac{1}{4}(x-y)^2 + \sum_{n=1}^\infty \frac{n^2 \pi^2 -2\big( 1 - (-1)^n \big)}{2\pi n^3 \sinh n\pi} \Big[ \sinh nx \sin ny - \sin nx \sinh ny \\
& \qquad \qquad \qquad + \sin n(\pi-x) \sinh n(\pi-y) - \sinh n(\pi-x) \sin n(\pi-y) \Big] ,
\end{align*}
as we now explain. Observe that $-\Delta u = 1$ because the infinite series is a harmonic function, and $u=0$ on the boundary of $T$ by simple calculations with Fourier series when $0<x<\pi,y=0$, and when $x=\pi,0<y<\pi$; also $u=0$ on the hypotenuse where $y=x$. 

The torsion function is known to attain its maximum somewhere on the line of symmetry $y=\pi-x$, either by  general symmetry results \cite{CD11,CDK14} or else by arguing as in the proof of \autoref{pr:half-disk} part (ii). On that line of symmetry we evaluate
\begin{align*}
& u(x,\pi-x) \\
& = - (x-\pi/2)^2 + \sum_{n=1}^\infty \frac{n^2 \pi^2 -2\big( 1 - (-1)^n \big)}{\pi n^3 \sinh n\pi} \big[ (-1)^{n+1} \sinh nx - \sinh n(\pi-x) \big] \sin nx .
\end{align*}
The series converges exponentially on each closed subinterval of $(0,\pi)$, and so we may differentiate term-by-term to find 
\begin{align}
& \frac{d\ }{dx} u(x,\pi-x) \label{eq:derivseries} \\
& = - 2(x-\pi/2) + \sum_{n=1}^\infty \frac{n^2 \pi^2 -2\big( 1 - (-1)^n \big)}{\pi n^2 \sinh n\pi} \Big\{ \big[ (-1)^{n+1} \cosh nx + \cosh n(\pi-x) \big] \sin nx \notag \\
& \qquad \qquad \qquad \qquad \qquad \qquad \qquad \qquad \qquad + \big[ (-1)^{n+1} \sinh nx - \sinh n(\pi-x) \big] \cos nx \Big\} , \notag
\end{align}
where once again the series converges exponentially on closed subintervals of $(0,\pi)$. 

The absolute value of the $n$-th term in series \eqref{eq:derivseries} is bounded  by 
\[
\frac{\pi(e^{nx}+e^{n(\pi-x)})}{\sinh(n\pi)} < 3\pi (e^{-n(\pi-x)}+e^{-nx}) ,
\] 
as we see by bounding the $\sin$ and $\cos$ terms with $1$, adding the $\sinh$ and $\cosh$ terms having the same arguments, and using that $\sinh(n \pi) > e^{n\pi}/3$ for $n \geq 1$. Hence the infinite series \eqref{eq:derivseries} is bounded term-by-term by $3\pi$ times the sum of two geometric series having ratios $e^{-(\pi-x)}$ and $e^{-x}$. 

The derivative of $u$ along the line of symmetry is positive at $x=2.1860525$ and negative at $x=2.1860530$, as one finds by evaluating the first 20 terms of the series in \eqref{eq:derivseries} and then estimating the remainder with the geometric series as above. Hence $u$ has a local maximum at $x=2.186053$ to 6 decimal places. This local maximum is a global maximum because $\sqrt{u}$ is concave (see \cite[Example 1.1]{B85} or \cite{K84}). Translating to the left and downwards by $\pi/2$ and then scaling down by a factor of $\sqrt{2}/\pi$ and rotating counterclockwise by $45$ degrees, we find the torsion function on the original triangle has a  maximum at 
\[
x= \frac{2}{\pi} (2.186053-\pi/2) = 0.39168
\]
to 5 decimal places. 
\end{proof}

\section{\bf Concluding remarks}
\label{sec:remarks}

The counterexamples in this paper concern Poisson's equation for $f(z)=1$ and $f(z)=\lambda z$. One can find a whole family of counterexamples using $f(z)=a+bz$, where $a>0$ and $0<b \leq \lambda$. Note the maximum point depends on $b$ but not $a$, as one checks by rescaling the solution $u$ to $u/a$. To study this maximum point as $b$ varies, one starts with the eigenfunctions of $-\Delta-b$ on the half-disk or right isosceles triangle and notes that the eigenfunctions are the same as for $-\Delta$, just with eigenvalues shifted by $b$. The corresponding torsion function can be computed in terms of an eigenfunction expansion, and then the position of the maximum point can be carefully numerically located. We leave such investigations to the interested reader. 

Finally, while our counterexamples involve linear Poisson equations, our choices of $f$ could presumably be perturbed to obtain genuinely nonlinear counterexamples.

\section*{Acknowledgments}
This work was partially supported by grants from the Simons Foundation (\#204296 to Richard Laugesen) and Polish National Science Centre (2012/07/B/ST1/03356 to Bart\-{\l}omiej Siudeja). We are grateful to the Institute for Computational and Experimental Research in Mathematics (ICERM) for supporting participation by Benson and Minion in IdeaLab 2014, where the project began. Thanks go also to the Banff International Research Station for supporting participation by Laugesen and Siudeja in the workshop ``Laplacians and Heat Kernels: Theory and Applications (March 2015), during which some of the research was conducted.

\newcommand{\doi}[1]{%
 \href{http://dx.doi.org/#1}{doi:#1}}
\newcommand{\arxiv}[1]{%
 \href{http://front.math.ucdavis.edu/#1}{ArXiv:#1}}
\newcommand{\mref}[1]{%
\href{http://www.ams.org/mathscinet-getitem?mr=#1}{#1}}

\end{document}